\newtheorem{te}{Theorem}[section]
\newtheorem{os}[te]{Remark}
\newtheorem{prop}[te]{Proposition}
\numberwithin{equation}{section}
\begin{document}

    \title[Fractional telegraph equations]{Analytical solution of space-time fractional telegraph-type equations involving
    Hilfer and Hadamard derivatives}

	\author{Ram K. Saxena $^1$}
	    \address{${}^1$Department of Mathematics and Statistics, Jai Narain Vyas University,
	    Jodhpur.
	    }
	
    \author{Roberto Garra$^2$}

    \author{Enzo Orsingher$^2$}
    \address{${}^2$Dipartimento di Scienze Statistiche, ``Sapienza'' Universit\`a di Roma.}

    \keywords{Fractional telegraph equation, Mittag-Leffler function, Hilfer derivative, Hadamard fractional derivative,
    Riesz-Feller space-fractional derivative\\
    {\it MSC 2010\/}:  26A33; 33E12;35R11}

    \date{\today}

    \begin{abstract}
	In this paper we consider space-time fractional telegraph equations, where the time derivatives are intended in the sense
	of Hilfer and Hadamard while the space fractional derivatives are meant in the sense of Riesz-Feller.
	We provide the Fourier transforms of the solutions of some Cauchy problems for these fractional equations.
	Probabilistic interpretations of some specific cases are also provided.
	
	\smallskip

    \end{abstract}

    \maketitle

    \section{Introduction}
    In recent years, an increasing interest for the analysis and applications of space and time-fractional generalizations of the telegraph-type equations has been developed in the literature. One of the first works in this direction was the paper by Orsingher and Zhao \cite{tele}
    about the space-fractional telegraph equation and then a number of papers regarding applications in probability \cite{toaldo,ptrf},
    in physics \cite{Baleanu,compte} or merely of mathematical interest \cite{chen,Garg} appeared in the literature. 
    
    In the first part of this paper we 
    study the fractional telegraph equation involving 
    Hadamard-type time-fractional derivatives, that is
    \begin{equation}
    \left(t\frac{\partial}{\partial t}\right)^{\nu}\left(t\frac{\partial}{\partial t}\right)^{\nu}u+2\lambda\left(t\frac{\partial}{\partial t}\right)^{\nu}u= c^2\frac{\partial^2 u}{\partial x^2},
    \end{equation}
    with $x\in \mathbb{R}$ and $t>t_0>0$. We denote with the symbol $\left(t\frac{\partial}{\partial t}\right)^{\nu}u$ the 
    Caputo-type modification of the Hadamard derivative of order $\nu$ recently introduced in \cite{bal} as follows
    \begin{equation}
    \left(t\frac{d}{dt}\right)^{\nu}u(t)= \frac{1}{\Gamma(n-\nu)}\int_{t_0}^t\left(\ln\frac{t}{\tau}\right)^{n-\nu-1}\left(\tau \frac{d}{d\tau}\right)^n
    u(\tau)\frac{d\tau}{\tau},
    \end{equation}
    for $n-1<\nu<n$ and $n\in \mathbb{N}$.
    We will explain the reason why we choose to use this symbol for the Caputo-like Hadamard fractional operator and recall some of its main
    properties in the next section devoted to mathematical preliminaries.
    Within the framework of the analysis of the fractional telegraph equation involving Hadamard derivatives we will present analytical results
    concerning the Fourier transform of the fundamental solution and give some insights regarding the probabilistic meaning of the obtained
    results. 
    
    In the second part of the paper, we consider the generalized telegraph equation involving Hilfer fractional derivatives $ D_{t}^{\gamma,\delta}$ in time and Riesz-Feller fractional derivatives $_{x}D^{\alpha}_{\theta}$ in space. The generalized telegraph equation is the following partial differential equation
    \begin{equation}\label{linea}
    \frac{\partial^2 u}{\partial t^2}+2\lambda\frac{\partial u}{\partial t}= c^2\frac{\partial^2 u}{\partial x^2}-\omega u, \quad \omega >0 
    \end{equation}
    and some fractional generalizations of this equation have been recently studied in a number of papers (see for example \cite{Garg,haub} and 
    the references therein). It is well-known that equation \eqref{linea} arises, for example, in the analysis of transmission lines with losses.

    Here we give a new useful representation formula for the solution of the Cauchy problem for the equation 
    \begin{equation}\label{a}
    D_{t}^{2\gamma,\delta}u(x,t)+2\lambda D_{t}^{\gamma,\delta} u(x,t)= c^2 \ _{x}D^{\alpha}_{\theta} u(x,t)-\omega u(x,t)+F(x,t), \quad x \in \mathbb{R}, t\geq 0
    \end{equation}
    We observe that general results about non-homogeneous generalized fractional telegraph equations involving Hilfer and Riesz-Feller 
    operators have been discussed in \cite{haub}. Here we focus our attention on a specific case, that of equation \eqref{a},
    where the solution can be expressed in terms of linear combination of two-parameter Mittag-Leffler functions. This permits 
    us to give an interesting probabilistic interpretation of the solutions when the parameters $\gamma$ and $\delta$ are suitably
    chosen. 
    We also show that the obtained results extend those appeared so far for the classical and the fractional telegraph equation.

    \section{Preliminaries}
   	For the reader's convenience we first present the necessary mathematical preliminaries about fractional operators and 
   	their main properties used in the next sections. We moreover give a brief recall about the classical telegraph process \cite{Gold,kac} and its
   	relationship with the telegraph equation. 
   	
   	\subsection{Fractional differential operators}
   	The classical Hadamard fractional derivative reads (see for example \cite{e-p})
   	\begin{equation}
   	       ^{R}\left(t\frac{d}{dt}\right)^{\nu}u(t)= \frac{1}{\Gamma(n-\nu)}\left(t \frac{d}{dt}\right)^n\int_{t_0}^t\left(\ln\frac{t}{\tau}\right)^{n-\nu-1}
   	       u(\tau)\frac{d\tau}{\tau},
   	       \end{equation}
   	 where $t>t_0>0$ and $n-1<\nu<n$, with $n\in \mathbb{N}$.\\
   In the recent paper \cite{bal} the Caputo-type regularized counterpart of the Hadamard fractional derivative 
   has been introduced as 
   \begin{equation}
       \left(t\frac{d}{dt}\right)^{\nu}u(t)= \frac{1}{\Gamma(n-\nu)}\int_{t_0}^t\left(\ln\frac{t}{\tau}\right)^{n-\nu-1}\left(\tau \frac{d}{d\tau}\right)^n
       u(\tau)\frac{d\tau}{\tau},
       \end{equation}
        where $t>t_0>0$ and $n-1<\nu<n$, with $n\in \mathbb{N}$.
     In this paper we adopt the symbol $\left(t\frac{d}{dt}\right)^{\nu}$ for the Caputo-type Hadamard 
     fractional derivative in order to highlight the relationship with the
     differential operator $\delta = t\frac{d}{dt}$. Indeed, we observe that, for $\nu = 1 $ this operator coincides by definition with $\delta$.   
   One of the useful properties that we will be employed in the next section is the following one
   \begin{equation}\label{pbe}
   \left(t\frac{d}{dt}\right)^{\nu}\left(\ln\frac{t}{t_0}\right)^{\beta}= \frac{\Gamma(\beta+1)}{\Gamma(\beta+1-\nu)}
   \left(\ln\frac{t}{t_0}\right)^{\beta-\nu}, \quad \mbox{when $t_0>0, \ \beta > -1 / \{0\}$}
   \end{equation}
   which can be proved by simple calculations. 
   We remark that, in the case $\beta = 0$, as can be simply understood in view of the definition (2.2), we have that 
   \begin{equation}
    \left(t\frac{d}{dt}\right)^{\nu} const. = 0.
   \end{equation}
   We refer to \cite{bal} for a complete discussion about the functional setting and main
   properties of this integro-differential operator.
    
    In a series of works (see \cite{Hilfer} and the references therein), Hilfer studied applications of a generalized fractional operator
    having the Riemann--Liouville and the Caputo derivatives as specific cases.
     The Hilfer's derivative is defined as
        \begin{align}
        \nonumber D_{a^+}^{\mu,\nu}u(x,t)&= \frac{1}{\Gamma(\nu(n-\mu))}\int_a^t dy \ (t-y)^{\nu(n-\mu)-1}\frac{d^n}{dy^n}\int_a^y
        \frac{(y-z)^{(1-\nu)(n-\mu)-1}}{\Gamma((1-\nu)(n-\mu)}u(x,z)dz\\
        & =I_{a^+}^{\nu(n-\mu)}D_{a^+}^{\mu+\nu n-\mu \nu}u(x,t), \quad \nu \in [0,1), \ \mu \in (0,1).
        \end{align}
        Clearly $n-1<(1-\nu)(n-\mu)< n$ and $D_{a^+}^{\mu+\nu n-\mu \nu}$ is the Riemann-Liouville fractional derivative and 
        $I_{a^+}^{\nu(n-\mu)}$ is the Riemann-Liouville integral (see e.g. \cite{pod,e-p}). Hereafter we will take for simplicity $a= 0$. 
        
        The Laplace transform of the Hilfer fractional derivative w.r. to the time reads (see \cite{tomo})
        \begin{align}\label{la}
        \mathcal{L}\left(D_{a^+}^{\mu,\nu}u\right)(x,s)&= \int_0^\infty e^{-st} D_{a^+}^{\mu,\nu}u(x,t) dt\\
        \nonumber &= s^{\mu}\tilde{u}(x,s)-\sum_{k=0}^{n-1}s^{n-k-\nu(n-\mu)-1}\frac{d^k}{dt^k}I_{0^+}^{(1-\nu)(n-\mu)} u(x,t)\bigg|_{t=0},
        \end{align}
        where $\left(\mathcal{L}u\right)(s)= \tilde{u}(x,s)$.
       
       We now recall the definition of the Riesz-Feller fractional derivative and for more details we refer, for example, to the encyclopedical 
       book by Samko et al. \cite{e-p}.
       For $0<\alpha\leq 2$ and $|\theta|\leq \min (\alpha, 2-\alpha)$, the Riesz-Feller derivative is defined as
       \begin{align}
       \label{b} &\left(_{x}D^{\alpha}_{\theta}f\right)(x)=\\ & \nonumber =\frac{\Gamma(1-\alpha)}{\pi}\bigg\{
       \sin\frac{\pi(\alpha+\theta)}{2}\int_0^{\infty}\frac{f(x+\xi)-f(x)}{\xi^{1+\alpha}}d\xi+
        \sin\frac{\pi(\alpha-\theta)}{2}\int_0^{\infty}\frac{f(x-\xi)-f(x)}{\xi^{1+\alpha}}d\xi\bigg\}.
       \end{align}
       For $\theta = 0$, the Riesz-Feller derivative becomes the Riesz fractional derivative, that is 
       \begin{equation}
        \left(_{x}D^{\alpha}_{0}f\right)(x)=-\left(-\frac{d^2}{dx^2}\right)^{\alpha/2}
       \end{equation}
       The Fourier transform of \eqref{b} reads
       \begin{equation}\label{fu}
       \mathcal{F}\left(_{x}D^{\alpha}_{\theta}f\right)(\beta)
       = -|\beta|^{\alpha}e^{i  \frac{\theta \pi}{2}sign \beta}
       f^*(\beta)= -\psi^{\theta}_{\alpha}(\beta)f^*(\beta), 
       \end{equation}
    	where $f^*(\beta)= \left(\mathcal{F}f\right)(\beta)$.
    	
    	We note that the fundamental solution to the space-fractional 
    	differential equation involving the Riesz-Feller derivative
    	\begin{align}
    	\nonumber &\frac{\partial p}{\partial t}= _{x}D^{\alpha}_{\theta}p,
    	\end{align}
    	has Fourier transform
    	\begin{equation}
    	p^*(\beta,t)= exp\{-|\beta|^{\alpha} t e^{i  \frac{\theta \pi}{2}}sign \beta \},
    	\end{equation}
    	which is the characteristic function of a stable process 
    	$S_{\alpha}(\sigma, \gamma, \mu; t)$, with $\mu = 0$,
    	$\sigma= \cos \frac{\theta \pi}{2}$ and $\gamma = -\frac{\tan\frac{\pi \theta}{2}}{\tan\frac{\pi \alpha}{2}}$, (see e.g. \cite{zala}).
    	
    	\subsection{The telegraph process}
    	    
    	     The so-called telegraph process, $\mathcal{T}(t)$, $t>0$, describes the random motion of a particle
    	        moving on the real line with finite velocity $c$ and alternating two possible directions
    	        of motions (forward or backward) at Poisson paced times with constant rate $\lambda>0$.
    	        This simple finite-velocity random motion was firstly suggested by the description of particles transport.
    	        In this framework, the first derivation given in literature, as far as we kwow, was given 
    	        by F\"urth in a discussion of several models of fluctuation
    	        phenomena in physics and biology. A similar model was also considered by G.I.Taylor \cite{tay} to treat
    	        turbulent diffusion and then studied in detail
    	        by Sidney Goldstein in \cite{Gold} and Mark Kac in \cite{kac}. This process is
    	        called in the literature \textit{telegraph process} because the probability law of
    	        $\mathcal{T}(t)$ coincides with the fundamental solution of the hyperbolic telegraph equation.
    	        Moreover, as a limiting case it coincides with the transition density
    	        of the classical Brownian motion. 
    	       
    	        In more detail, the classical symmetric telegraph process is defined as
    	            \begin{equation}\label{oooo}
    	                \mathcal{T}(t)=V(0)\int_0^t(-1)^{\mathcal{N}(s)}ds, \qquad t \ge 0,
    	            \end{equation}
    	            where $V(0)$ is a two-valued random variable independent of
    	            the Poisson process $\mathcal{N}(t)$, $t\geq 0$.

    	    	    The component of the
    	            unconditional distribution of the telegraph process concentrated inside the interval $(-ct,+ct)$, is given by
    	            \begin{align}
    	                P\{\mathcal{T}(t)\in dx\}
    	                 &= dx \frac{e^{-\lambda t}}{2c}\left[\lambda I_0\left(\frac{\lambda}{c}
    	                \sqrt{c^2t^2-x^2}\right)+\frac{\partial}{\partial t}I_0\left(\frac{\lambda}{c}
    	                \sqrt{c^2t^2-x^2}\right)
    	                \right], \qquad  |x|<ct.
    	                \label{tp}
    	            \end{align}
    	            The component of the unconditional distribution that pertains to the Poisson probability of no
    	            changes of directions is concentrated on the boundary i.e. $x= \pm ct$,
    	            \begin{equation}
    	                P\{\mathcal{T}(t)=\pm ct\}=\frac{e^{-\lambda t}}{2}.
    	                \label{diso1}
    	            \end{equation}
    	            Hence we are able to give in explicit form the density $f(x,t)$ of the distribution of
    	            $\mathcal{T}(t)$, that is
    	            \begin{align}
    	            \nonumber f(x,t) &= \frac{e^{-\lambda t}}{2c}\left[\lambda I_0\left(\frac{\lambda}{c}
    	                \sqrt{c^2t^2-x^2}\right)+\frac{\partial}{\partial t}I_0\left(\frac{\lambda}{c}
    	                \sqrt{c^2t^2-x^2}\right)\right]1_{(-ct,+ct)}(x)\\
    	            \nonumber &+\frac{e^{-\lambda t}}{2}[\delta(ct-x)+\delta(ct+x)],
    	            \end{align}
    	            where $\Theta(\cdot)$ is the Heaviside function and $\delta(\cdot)$ is the Dirac delta function.
    	            We remark  that the component of the distribution of the
    	            telegraph process concentrated in $(-ct,+ct)$, given by formula \eqref{tp}, is the solution to the Cauchy
    	            problem
    	            \begin{equation}
    	                \label{te1}
    	                \begin{cases}
    	                    \frac{\partial^2 p}{\partial t^2}+2\lambda \frac{\partial p}{\partial
    	                    t}= c^2 \frac{\partial^2 p}{\partial x^2},\\
    	                    p(x,0)=\delta(x),\\
    	                    \frac{\partial p}{\partial t}(x,t)\bigg|_{t=0}=0.
    	                \end{cases}
    	            \end{equation}

    \section{The fractional telegraph equation involving Hadamard fractional derivatives}
    
    In this section we study the following time-fractional telegraph equation involving Caputo-like regularized 
    Hadamard derivatives  introduced in the previous section
    \begin{equation}\label{opom}
           \left(t\frac{\partial}{\partial t}\right)^{\nu}\left(t\frac{\partial}{\partial t}\right)^{\nu}u+2\lambda\left(t\frac{\partial}{\partial t}\right)^{\nu}u= c^2\frac{\partial^2 u}{\partial x^2}, \quad \nu \in (0,1)
    \end{equation}
    It corresponds to a generalization of the 
    telegraph equation with variable coefficients. The main results presented in this section are strongly 
    inspired by the previous one obtained by Beghin and Orsingher in \cite{ptrf}. Indeed we 
    will do an Ansatz about the form of the Fourier transform of the fundamental solution that is based on the 
    results proved in \cite{ptrf} and then we verify our assumption. We will also explain the heuristic reasonableness
    of our statement.
    \begin{te}
    The Fourier transform of the solution to the Cauchy problem
    \begin{equation}
    \begin{cases}
       \left(t\frac{\partial}{\partial t}\right)^{\nu}\left(t\frac{\partial}{\partial t}\right)^{\nu}u+2\lambda\left(t\frac{\partial}{\partial t}\right)^{\nu}u= c^2\frac{\partial^2 u}{\partial x^2}, \quad \nu \in (0,1), \ t_0>0 \\
        u(x,t_0)= \delta(x),\\
        \frac{\partial u}{\partial t}\big|_{t=t_0}=0,
        \end{cases}
    \end{equation}
    is given by
    \begin{align}\label{rombo}
    u^*(\beta,t)= \frac{1}{2}\bigg[\left(1+\frac{\lambda}{\sqrt{\lambda^2-c^2\beta^2}}\right)E_{\nu,1}\left(\eta_1 ln^{\nu}\left(\frac{t}{t_0}\right)\right)
    \\
    \nonumber +\left(1-\frac{\lambda}{\sqrt{\lambda^2-c^2\beta^2}}\right)E_{\nu,1}\left(\eta_2 ln^{\nu}\left(\frac{t}{t_0}\right)\right)\bigg],
    \end{align}
    where 
    \begin{equation}
    \eta_1 = -\lambda +\sqrt{\lambda^2-c^2\beta^2}, \quad 
    \eta_2= -\lambda -\sqrt{\lambda^2-c^2\beta^2} 
    \end{equation}
    \end{te}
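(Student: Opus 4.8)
The plan is to reduce the Cauchy problem to a fractional ordinary differential equation in time by Fourier transforming in $x$, and then to verify the proposed expression by exploiting an eigenfunction property of the Mittag-Leffler function under the Caputo-type Hadamard operator. First I would apply the Fourier transform $\mathcal{F}(\cdot)(\beta)$ with respect to the space variable. Since the Hadamard operators act only on $t$ and $\mathcal{F}(\partial^2 u/\partial x^2)=-\beta^2 u^*$, the problem becomes
\[
\left(t\frac{d}{dt}\right)^{\nu}\left(t\frac{d}{dt}\right)^{\nu}u^*+2\lambda\left(t\frac{d}{dt}\right)^{\nu}u^*+c^2\beta^2 u^*=0,
\]
subject to $u^*(\beta,t_0)=1$ (the Fourier transform of $\delta(x)$) and $\partial_t u^*(\beta,t)\big|_{t=t_0}=0$.

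The central step is to show that, for any constant $\eta$, the function $g_\eta(t)=E_{\nu,1}\!\left(\eta\ln^{\nu}(t/t_0)\right)$ is an eigenfunction of $\left(t\frac{d}{dt}\right)^{\nu}$ with eigenvalue $\eta$. I would expand $g_\eta$ as the series $\sum_{k\ge 0}\eta^k\ln^{\nu k}(t/t_0)/\Gamma(\nu k+1)$, apply the operator term by term using property \eqref{pbe} with $\beta=\nu k$, observe that the $k=0$ term is annihilated because the operator kills constants, and reindex the sum to recover exactly $\eta\, g_\eta$. Iterating gives $\left(t\frac{d}{dt}\right)^{\nu}\left(t\frac{d}{dt}\right)^{\nu}g_\eta=\eta^2 g_\eta$. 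Substituting a linear combination $u^*=A\,g_{\eta_1}+B\,g_{\eta_2}$ into the fractional ODE then reduces it to the algebraic characteristic equation $\eta^2+2\lambda\eta+c^2\beta^2=0$, whose roots are precisely $\eta_1$ and $\eta_2$.

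It remains to fix $A$ and $B$ from the initial data. Since $E_{\nu,1}(0)=1$, the condition $u^*(\beta,t_0)=1$ immediately gives $A+B=1$. For the second condition, differentiating $g_{\eta_i}$ in $t$ and letting $t\to t_0^+$ shows that the dominant contribution is the $k=1$ term, of order $\ln^{\nu-1}(t/t_0)$ with coefficient proportional to $A\eta_1+B\eta_2$; requiring this leading term to vanish yields $A\eta_1+B\eta_2=0$. Solving the $2\times 2$ system $A+B=1$, $A\eta_1+B\eta_2=0$ and using $\eta_1-\eta_2=2\sqrt{\lambda^2-c^2\beta^2}$ gives
\[
A=\frac12\left(1+\frac{\lambda}{\sqrt{\lambda^2-c^2\beta^2}}\right),\qquad B=\frac12\left(1-\frac{\lambda}{\sqrt{\lambda^2-c^2\beta^2}}\right),
\]
which is exactly the claimed representation \eqref{rombo}.

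I expect the main obstacle to be the rigorous handling of the second initial condition. Because $\nu\in(0,1)$, the ordinary $t$-derivative of $E_{\nu,1}\!\left(\eta\ln^{\nu}(t/t_0)\right)$ is genuinely singular at $t=t_0$, so $\partial_t u^*\big|_{t=t_0}=0$ cannot be imposed pointwise in the classical sense; it must instead be read as the cancellation of the dominant $\ln^{\nu-1}$ singularity, and the specific coefficients are engineered precisely so that this leading term disappears (the subleading terms of order $\ln^{2\nu-1}$ and higher carry the coefficient $A\eta_1^2+B\eta_2^2=-c^2\beta^2$ and vanish in the limit only for $\nu>1/2$). A secondary technical point is justifying the termwise application of the Hadamard operator and of $\partial_t$ to the Mittag-Leffler series, which follows from its uniform convergence on compact subsets of $t>t_0$. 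This matches the \emph{Ansatz-and-verify} strategy announced in the text and, for $\nu=1$, collapses to the Fourier transform of the classical telegraph kernel, confirming the heuristic reasonableness of the result.
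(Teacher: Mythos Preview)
Your approach is essentially the same as the paper's: both verify the Ansatz by showing that $E_{\nu,1}\!\bigl(\eta\,\ln^{\nu}(t/t_0)\bigr)$ is an eigenfunction of $\left(t\,\frac{d}{dt}\right)^{\nu}$ with eigenvalue $\eta$ via termwise application of \eqref{pbe}, which reduces the Fourier-transformed equation to the characteristic polynomial $\eta^2+2\lambda\eta+c^2\beta^2=0$. Your treatment is in fact more thorough, since the paper's proof only checks that $\mathcal{H}_\nu u^*=-c^2\beta^2 u^*$ and does not verify the initial conditions at all; your discussion of the $\ln^{\nu-1}(t/t_0)$ singularity in $\partial_t u^*$ at $t=t_0$ (and the residual $\ln^{2\nu-1}$ term for $\nu\le 1/2$) flags a genuine issue that the paper's statement and proof leave unaddressed.
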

    \begin{proof}
    Let us introduce the operator
    $$\mathcal{H}_{\nu}= \left(t\frac{\partial}{\partial t}\right)^{\nu}\left(t\frac{\partial}{\partial t}\right)^{\nu}+2\lambda \left(t\frac{\partial}{\partial t}\right)^{\nu}.$$
    By using the property \eqref{pbe} of the Caputo-like Hadamard derivative, we have that
    \begin{align}
    \nonumber &\left(t\frac{\partial}{\partial t}\right)^{\nu} E_{\nu,1}\left(\eta_1 ln^{\nu}\left(\frac{t}{t_0}\right)\right) 
    = 
    \sum_{k= 1}^{\infty}\frac{\Gamma(\nu k+1)}{\Gamma(\nu k+1-\nu)}
    \frac{\left(ln(t/t_0)\right)^{\nu k-\nu}\eta_1^k}{\Gamma(\nu k+1)}\\
     \nonumber &= \eta_1 \sum_{k'= 0}^{\infty}
       \frac{\left(ln(t/t_0)\right)^{\nu k'}\eta_1^{k'}}{\Gamma(\nu k'+1)}=   \eta_1
                          E_{\nu,1}\left(\eta_1 ln^{\nu}\left(\frac{t}{t_0}\right)\right),
    \end{align}
    where we used the fact that the Caputo-type Hadamard derivative of a constant is zero. On the basis of this result we have by straight calculation that 
    \begin{equation}
    \mathcal{H}_{\nu}u^*(\beta,t)= -c^2\beta^2 u^*(\beta, t),
    \end{equation}
    as claimed.
    \end{proof}
    Result (3.3) is heuristically motivated by the fact that the Fourier transform of the solution of the fractional telegraph equation involving 
    Hadamard derivatives can be obtained from the solution of the fractional problem involving Caputo derivatives by means 
    of the deterministic time-change $t\rightarrow \ln t$. 
    
   This observation implies that, for $\nu=1/2$, formula \eqref{rombo} becomes the charachteristic function of a telegraph
   process at a time-changed Brownian motion, as suggested by the results of section 4 of \cite{ptrf}.
    In fact, for $\nu = 1/2$, formula \eqref{rombo} can be written as
    \begin{align}
        u^*(\beta,t)=& \frac{\lambda}{2\sqrt{\pi ln\frac{t}{t_0}}}\int_0^{\infty}e^{-\frac{z^2}{4 ln\frac{t}{t_0}}-\lambda z}
            \bigg\{\frac{e^{z\sqrt{\lambda^2-c^2\beta^2}}-e^{-z\sqrt{\lambda^2-c^2\beta^2}}}{\sqrt{\lambda^2-c^2\beta^2}}\bigg\}dz\\
            \nonumber &+\frac{1}{2\sqrt{\pi ln\frac{t}{t_0}}}\int_0^{\infty}e^{-\frac{z^2}{4 ln\frac{t}{t_0}}-\lambda z}
                        \bigg\{e^{z\sqrt{\lambda^2-c^2\beta^2}}+e^{-z\sqrt{\lambda^2-c^2\beta^2}}\bigg\}dz
    \end{align}
    that coincides with the characteristic
    function of the process
    \begin{equation}
    W(t)= \mathcal{T}\left(\bigg|B\left(ln\left(\frac{t}{t_0}\right)\right)\bigg|\right),
    \end{equation}
    that is a telegraph proces with random Brownian time with a logarithmic deterministic time-change (see \cite{ptrf}).
    
    We moreover observe that for $\nu \rightarrow 1$, we have studied a fractional generalization of the telegraph equation with variable coefficients
    \begin{equation}\label{laa}
    \left(t\frac{\partial}{\partial t}\right)^2 u+2\lambda \left(t\frac{\partial}{\partial t}\right) u = c^2 \frac{\partial^2 u}{\partial x^2}.
    \end{equation}
    Then, it is simple to prove that the fundamental solution of this telegraph-type equation coincides with the law of the classical telegraph process with the aforementioned deterministic time change.
    
    \smallskip
    
    On the basis of the previous considerations, we now consider the space-time fractional telegraph equation
    \begin{equation}\label{bruno}
     \left(t\frac{\partial}{\partial t}\right)^{\nu}\left(t\frac{\partial}{\partial t}\right)^{\nu}u+2\lambda\left(t\frac{\partial}{\partial t}\right)^{\nu}u= -c^2\left(-\frac{\partial^2}{\partial x^2}\right)^{\alpha/2}u, \quad x\in \mathbb{R}, t>0
    \end{equation}
	involving time-fractional Hadamard derivatives and space-fractional Riesz derivative (see the previous section). Equation \eqref{bruno}
	has been previously studied in \cite{toaldo} in the case where the Hadamard derivatives are replaced with Caputo fractional derivatives.
	In this paper the authors have shown that the fundamental solution of the space-time fractional telegraph-type equation
	\begin{equation}\label{bruno}
	     \frac{\partial^{2\nu}u}{\partial t^{2\nu}}+2\lambda\frac{\partial^{\nu}u}{\partial t^{\nu}}= -c^2\left(-\frac{\partial^2}{\partial x^2}\right)^{\alpha/2}u, \quad \nu\in (0,\frac{1}{2}], \ \alpha \in (0,2]
	\end{equation}
	coincides with the distribution of the composition of a a stable process $S^{\alpha}(t)$, $t>0$ with the positively-valued process
	\begin{equation}
	\mathcal{L}^{\nu}(t)= \inf\{s\geq 0 : \mathcal{H}^{\nu}(s)=  H_1^{2\nu}(s)+(2\lambda)^{1/\nu} H_2^{\nu}(s)\geq t\},\quad t>0,
	\end{equation}
	where $H_1^{2\nu}$ and $H_2^{\nu}$ are independent positively skewed stable processes of order $2\nu$ and $\nu$ respectively.
    With the next proposition we generalize Theorem 4.1 of \cite{toaldo} to the case of equation \eqref{bruno}
    \begin{prop}
    For $\nu \in (0,1/2]$ and $\alpha \in (0,2]$, the fundamental solution of equation \eqref{bruno}
    coincides with the probability law of the process
    \begin{equation}
    W(t)= S^{\alpha}\left(c^2\mathcal{L}^{\nu}\left(ln \ \left(\frac{t}{t_0}\right)\right)\right), \quad t>0
    \end{equation}
    and has Fourier transform 
    \begin{align}
    \nonumber u^{*}(\beta,t)&= \frac{1}{2}\bigg[\left(1+\frac{\lambda}{\sqrt{\lambda^2-c^2|\beta|^{\alpha}}}\right)E_{\nu,1}\left(r_1 ln^{\nu}\ \frac{t}{t_0}\right)\\
    &+\left(1-\frac{\lambda}{\sqrt{\lambda^2-c^2|\beta|^{\alpha}}}\right)E_{\nu,1}\left(r_2 ln^{\nu}\ \frac{t}{t_0}\right),
    \end{align}
    
    where
    \begin{equation}
    r_1= -\lambda +\sqrt{\lambda^2-c^2|\beta|^{\alpha}},
    \quad  r_1= -\lambda -\sqrt{\lambda^2-c^2|\beta|^{\alpha}}
    \end{equation}
    
    \end{prop}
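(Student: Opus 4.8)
The plan is to reduce the space-time fractional problem \eqref{bruno} to the purely time-fractional problem already settled in the previous Theorem, and then to recover the probabilistic interpretation by combining the characteristic function of a stable process with the time change $t\mapsto\ln(t/t_0)$ connecting the Hadamard and the Caputo settings.

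First I would apply the Fourier transform in the space variable to equation \eqref{bruno}. Since the space operator here is the Riesz derivative $-\left(-\partial^2/\partial x^2\right)^{\alpha/2}$, which is the $\theta=0$ instance of the Riesz-Feller derivative, relation \eqref{fu} gives
\begin{equation}
\mathcal{F}\left(-c^2\left(-\frac{\partial^2}{\partial x^2}\right)^{\alpha/2}u\right)(\beta,t)=-c^2|\beta|^{\alpha}u^*(\beta,t).
\end{equation}
Hence, writing again $\mathcal{H}_{\nu}$ for the time operator introduced in the proof of the previous Theorem, the transformed equation reads $\mathcal{H}_{\nu}u^*(\beta,t)=-c^2|\beta|^{\alpha}u^*(\beta,t)$. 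This is structurally identical to the equation appearing there, with $c^2\beta^2$ replaced by $c^2|\beta|^{\alpha}$ and $\eta_{1,2}$ replaced by $r_{1,2}$.

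The verification of the Fourier-transform formula then proceeds exactly as before. Using the eigenfunction property
\begin{equation}
\left(t\frac{\partial}{\partial t}\right)^{\nu}E_{\nu,1}\left(r_i\ln^{\nu}\frac{t}{t_0}\right)=r_i\,E_{\nu,1}\left(r_i\ln^{\nu}\frac{t}{t_0}\right),\qquad i=1,2,
\end{equation}
which follows from property \eqref{pbe} together with the vanishing of the Hadamard derivative on constants, each Mittag-Leffler term is mapped by $\mathcal{H}_{\nu}$ to itself multiplied by $r_i^2+2\lambda r_i=-c^2|\beta|^{\alpha}$, since $r_{1,2}$ are precisely the roots of $r^2+2\lambda r+c^2|\beta|^{\alpha}=0$. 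The coefficients $\frac{1}{2}\left(1\pm\lambda/\sqrt{\lambda^2-c^2|\beta|^{\alpha}}\right)$ are fixed, as an Ansatz inspired by \cite{ptrf} and by the previous Theorem, so that $u^*(\beta,t_0)=1$; with this choice the direct computation confirms that $u^*$ solves the transformed equation.

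For the probabilistic statement I would condition on the time-change process. The symmetric stable process $S^{\alpha}$ has characteristic function $E\exp\{i\beta S^{\alpha}(\tau)\}=\exp\{-|\beta|^{\alpha}\tau\}$, so conditioning on $\mathcal{L}^{\nu}(\ln(t/t_0))$ gives
\begin{equation}
E\exp\left\{i\beta\,W(t)\right\}=E\exp\left\{-c^2|\beta|^{\alpha}\,\mathcal{L}^{\nu}\!\left(\ln\frac{t}{t_0}\right)\right\}.
\end{equation}
It then remains to identify this Laplace-type functional of $\mathcal{L}^{\nu}$ with the Mittag-Leffler combination above. Here I would invoke Theorem 4.1 of \cite{toaldo}, which establishes exactly this identity for the Caputo problem, where the argument of the Mittag-Leffler functions is $t^{\nu}$; the Hadamard problem is obtained from the Caputo one through the deterministic time change $t\mapsto\ln(t/t_0)$, which replaces $t^{\nu}$ by $\ln^{\nu}(t/t_0)$ and correspondingly composes the inverse process $\mathcal{L}^{\nu}$ with $\ln(t/t_0)$. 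This time-change correspondence, already noted heuristically after the previous Theorem, is the step I expect to require the most care, since it should be justified at the level of the governing equations rather than merely for the transforms; once it is in place, the matching of $u^*(\beta,t)$ with the characteristic function of $W(t)$ is immediate.
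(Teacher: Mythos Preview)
Your proposal is correct and follows essentially the same approach as the paper, which in fact omits a detailed proof and merely states that it ``consists of a simple combination of the arguments of the proof of Theorem 4.1 in \cite{toaldo} and of the previous considerations about the role of Hadamard time-fractional derivatives that induces a deterministic logarithmic time-change.'' Your write-up is actually more detailed than what the paper provides, spelling out both the verification of the Fourier transform via the eigenfunction property and the conditioning argument for the probabilistic identification.
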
 
	We neglect the complete proof of this proposition because it consists of a simple combination of the arguments of the proof of Theorem 4.1 in \cite{toaldo} and of the previous considerations about the role of Hadamard time-fractional derivatives that induces a deterministic logarithmic time-change. However we remark that this result can be easilty generalized to the multidimensional case (where the fractional Laplacian appears) and gives a more general probabilistic interpretation of the fundamental solution of the space-time fractional telegraph equation.

    \subsection{Decomposing the fractional telegraph equation involving Hadamard derivatives}
     Here we show that the fractional equation \eqref{opom} can be easily found by decoupling it into a system of two fractional equations and 
     we provide a brief discussion about the utility of this equivalent formulation.
     By simple calculation it can be proved that \eqref{opom} emerges from the system
     \begin{equation}\label{deco}
     \begin{cases}
    & \left(t\frac{\partial}{\partial t}\right)^{\nu}u= -c\frac{\partial}{\partial x} w\\
      &\left(t\frac{\partial}{\partial t}\right)^{\nu}w+2\lambda w = -c\frac{\partial u}{\partial x}
     \end{cases}
     \end{equation} 
     The utility of the decomposition \eqref{deco} of equation \eqref{laa} is twofold. It gives useful insights on the meaning of the equation 
     \eqref{opom} both from the probabilistic and physical points of view. From the physical points of view this is a generalization of the fractional Cattaneo heat model widely studied in the literature (see for example \cite{povst} and the references therein). The generalization stems from in the fact that, replacing the Caputo derivative with the Hadamard derivative, we obtain a deterministic time-change leading to a slower, logarithmic in time, heat wave propagation.
	On the other hand, we recall that the probability law of the telegraph process can be obtained by solving the coupled system
	  \begin{align}
	      & \frac{\partial}{\partial t}u= -c\frac{\partial w}{\partial x}\\
	       &\frac{\partial}{\partial t}w+2\lambda w = -c\frac{\partial u}{\partial x},
	       \end{align}  
	 where 
	 \begin{equation}
	 u(x,t)=f(x,t)+b(x,t), \quad w(x,t)= f(x,t)-b(x,t),
	 \end{equation}
	 and $f(x,t)$ represents the probability that the particle is near $x$ at time $t$ with forward velocity and $b(x,t)$  the probability that the particle is near $x$ at time $t$ with backward velocity.
    
    \section{The generalized fractional telegraph equation involving Hilfer derivatives}
    
    In this section we study the generalized non-homogeneous fractional telegraph equation involving 
    Hilfer derivatives in time and Riesz-Feller derivatives in space
    \begin{equation}\label{saz}
    D_{t}^{2\gamma,\delta}u(x,t)+2\lambda D_{t}^{\gamma,\delta} u(x,t)= c^2 \ _{x}D^{\alpha}_{\theta} u(x,t)-\omega u(x,t)+F(x,t), \quad x \in \mathbb{R},
     t\geq 0.
    \end{equation}
    In this framework our first general result is given by the following 
    \begin{te}
    Consider the generalized non-homogeneous space-time fractional telegraph equation \eqref{saz} subject to the following constraints
    \begin{equation}
    0<\gamma\leq 1, \quad 0\leq \delta\leq 1, \quad
    0 < \alpha \leq 2 \quad |\theta|\leq \min (\alpha, 2-\alpha).
    \end{equation}
    If the equation \eqref{saz} is subject to the following initial and 
    boundary coinditions
    \begin{align}
    I_{0^+}^{(1-\delta)(2-2\gamma)}u(x,0)= f_1(x),\\
    \frac{d}{dt}I_{0^+}^{(1-\delta)(2-2\gamma)}u(x,t)\bigg|_{t=0}=0,\\
    I_{0^+}^{(1-\delta)(1-\gamma)}u(x,0)=f_2(x),\\
    \lim_{|x|\rightarrow \infty}u(x,t)=0,
        \end{align}
    then its solution reads
    \begin{align}
    & u(x,t)
    =\frac{1}{2\pi(\xi-\eta)}\int_{-\infty}^{+\infty}
	\left[t^{\gamma+2\delta-2\gamma \delta -2}\{E_{\gamma, \gamma+2\delta-2\gamma\delta -1}(\xi t^{\gamma})-
	E_{\gamma, \gamma+2\delta-2\gamma\delta -1}(\eta t^{\gamma})\}\right] f_1^*(\beta)e^{-i\beta x}d\beta \label{remo}\\
	\nonumber &+  \frac{1}{2\pi(\xi-\eta)}\int_{-\infty}^{+\infty}
		\left[t^{\gamma-1}\{E_{\gamma, \gamma+\delta-\gamma\delta }(\xi t^{\gamma})-
		E_{\gamma, \gamma+\delta-\gamma\delta}(\eta t^{\gamma})\}\right] f_2^*(\beta)exp(-i\beta x)d\beta\\
	\nonumber &+\frac{1}{2\pi(\xi-\eta)}\int_0^t\tau^{\gamma-1}
	\int_{-\infty}^{+\infty}F^*(\beta, t-\tau)\left[E_{\gamma, \gamma}(\xi \tau^{\gamma})-E_{\gamma, \gamma}(\eta \tau^{\gamma})\right]exp(-i\beta x) d\beta d\tau,
    \end{align}
    where 
    \begin{equation}
    \xi= -\lambda +\sqrt{\lambda^2-b}, \quad \eta = -\lambda
    -\sqrt{\lambda^2-b},
    \end{equation}
    and 
    $b=\omega +c^2\psi^{\theta}_{\alpha}$.
    \end{te}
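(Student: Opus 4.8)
The plan is to solve \eqref{saz} by the joint Fourier--Laplace transform method, reducing the space-time fractional PDE to an algebraic equation that can be inverted termwise. First I would apply the Fourier transform in the space variable $x$. By the transform rule \eqref{fu} the Riesz--Feller operator acts as multiplication, $\mathcal{F}(\,_xD^{\alpha}_{\theta}u)=-\psi^{\theta}_{\alpha}(\beta)u^{*}(\beta,t)$, so that \eqref{saz} collapses to the purely temporal fractional equation
\[
D_{t}^{2\gamma,\delta}u^{*}(\beta,t)+2\lambda D_{t}^{\gamma,\delta}u^{*}(\beta,t)+b\,u^{*}(\beta,t)=F^{*}(\beta,t),
\]
where $b=\omega+c^{2}\psi^{\theta}_{\alpha}(\beta)$ collects the damping term and the symbol of the space operator, exactly the constant appearing in the statement.

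Next I would apply the Laplace transform in $t$ and invoke the rule \eqref{la} for the Hilfer derivative. For $D_{t}^{\gamma,\delta}$ (order in $(0,1]$, hence $n=1$) the single boundary term is $s^{-\delta(1-\gamma)}I_{0^{+}}^{(1-\delta)(1-\gamma)}u|_{t=0}=s^{-\delta(1-\gamma)}f_{2}^{*}$, while for $D_{t}^{2\gamma,\delta}$ (order in $(1,2]$, hence $n=2$) the two boundary terms are governed by $I_{0^{+}}^{(1-\delta)(2-2\gamma)}u|_{t=0}=f_{1}^{*}$ and its $t$-derivative, the latter vanishing by hypothesis. Substituting these into the transformed equation and solving the resulting algebraic relation yields
\[
\tilde u^{*}(\beta,s)=\frac{s^{1-\delta(2-2\gamma)}f_{1}^{*}+2\lambda\,s^{-\delta(1-\gamma)}f_{2}^{*}+\tilde F^{*}(\beta,s)}{s^{2\gamma}+2\lambda s^{\gamma}+b}.
\]
The denominator factors as $(s^{\gamma}-\xi)(s^{\gamma}-\eta)$, where $\xi,\eta=-\lambda\pm\sqrt{\lambda^{2}-b}$ are the two roots of $y^{2}+2\lambda y+b$ in the variable $y=s^{\gamma}$; this is precisely the origin of the quantities $\xi,\eta$ in the statement.

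The final step is the inversion. A partial fraction split produces the common factor $(\xi-\eta)^{-1}\big[(s^{\gamma}-\xi)^{-1}-(s^{\gamma}-\eta)^{-1}\big]$, and the elementary Mittag--Leffler Laplace pair $\mathcal{L}^{-1}\big[s^{\gamma-\beta}/(s^{\gamma}-a)\big](t)=t^{\beta-1}E_{\gamma,\beta}(at^{\gamma})$ lets me read off each contribution: the choice $\beta=\gamma+2\delta-2\gamma\delta-1$ for the $f_{1}$ term and $\beta=\gamma+\delta-\gamma\delta$ for the $f_{2}$ term reproduces the two-parameter Mittag--Leffler functions with exactly the second indices displayed in \eqref{remo}, while the $\tilde F^{*}$ term (for which $\beta=\gamma$) is inverted through the convolution theorem, giving the time-integral against $E_{\gamma,\gamma}$. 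Re-inverting the Fourier transform, which supplies the $\frac{1}{2\pi}\int_{-\infty}^{+\infty}(\cdots)e^{-i\beta x}\,d\beta$ wrappers, then delivers \eqref{remo}.

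The main obstacle is the second step: correctly extracting the boundary contributions of the two Hilfer derivatives from \eqref{la}. One must fix the integer $n$ for each order, check that the prescribed data on $I_{0^{+}}^{(1-\delta)(2-2\gamma)}u$ and $I_{0^{+}}^{(1-\delta)(1-\gamma)}u$ supply exactly what the Laplace rule requires (neither more nor less), and carefully track the fractional powers of $s$ so that each quotient matches the $s^{\gamma-\beta}/(s^{\gamma}-a)$ template with the right second parameter $\beta$. Everything downstream of this---the factorization, the partial fractions, and the termwise Mittag--Leffler inversion---is then routine bookkeeping.
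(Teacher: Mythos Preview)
Your approach is essentially identical to the paper's: apply the joint Fourier--Laplace transform using \eqref{fu} and \eqref{la} together with the initial data to obtain the algebraic relation $\tilde u^{*}(\beta,s)=\big(s^{1-\delta(2-2\gamma)}f_{1}^{*}+2\lambda s^{-\delta(1-\gamma)}f_{2}^{*}+\tilde F^{*}\big)/\big(s^{2\gamma}+2\lambda s^{\gamma}+b\big)$, factor the denominator via the roots $\xi,\eta$, split by partial fractions, invert each piece through the standard Mittag--Leffler Laplace pair (with the convolution theorem handling the $\tilde F^{*}$ term), and then undo the Fourier transform. The paper carries out exactly these steps, labeling the three Laplace--domain pieces $\Delta,\Xi,\Omega$ and inverting them one by one.
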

    
    \begin{proof}
    If we apply the Laplace transform with respect to the time variable $t$ and Fourier transform with respect to the space variable $x$, by using the initial conditions and formulas \eqref{la} and \eqref{fu}, we obtain
    \begin{align}\label{pr1}
    &s^{2\gamma}\tilde{u}^*(\beta,s)-s^{1-\delta(2-2\gamma)}f_1^*(\beta)
    +2\lambda\left[s^{\gamma}\tilde{u}^*(\beta,s)-s^{-\delta(1-\gamma)}f_2^*(\beta)\right]\\
    \nonumber& = -c^2 \psi_{\alpha}^{\theta}(\beta)\tilde{u}^*(\beta,s)
    -\omega \tilde{u}^*(\beta,s)+\tilde{F}^*(\beta,s),
    \end{align}
    where the symbol $*$ represents the Fourier transform with respect to the space variable $x$ and $\tilde{u}(\beta)$ stands for the Laplace transform with respect to the time variable.
    
    From \eqref{pr1}, we have that
    \begin{equation}\label{prr}
    \tilde{u}^*(\beta,s)= \frac{s^{1-\delta(2-2\gamma)}f_1^*(\beta)
        +2\lambda s^{-\delta(1-\gamma)}f_2^*(\beta)+\tilde{F}^*(\beta,s)
    }{s^{2\gamma}+2\lambda s^{\gamma}+b},
    \end{equation}
    with $b= \omega+ c^2 \psi^{\theta}_{\alpha}(\beta)$.
    
    To invert the Laplace transform, we set
    \begin{align}
      \label{pr2} \Delta(\beta,s)= \frac{s^{1-\delta(2-2\gamma)}}{s^{2\gamma}+2\lambda s^{\gamma}+b}\\
      \Xi(\beta,s)= \frac{s^{-\delta(1-\gamma)}}{s^{2\gamma}+2\lambda s^{\gamma}+b}\\
      \Omega(\beta, s)= \frac{1}{s^{2\gamma}+2\lambda s^{\gamma}+b}.
       \end{align}
    Inverting the Laplace transform in \eqref{pr2},
    we obtain
    \begin{align}
    \nonumber \Delta(\beta,t) &= \mathcal{L}^{-1}\left[\frac{s^{1-\delta(2-2\gamma)}}{s^{2\gamma}+2\lambda s^{\gamma}+b},t\right]\\
    \nonumber &= \frac{1}{\xi-\eta} \mathcal{L}^{-1}\left(
    \left[\frac{s^{1-\delta(2-2\gamma)}}{s^{\gamma}-\xi}-
    \frac{s^{-\delta(1-2\gamma)}}{s^{\gamma}-\eta}\right]\right)\\
    &= \frac{1}{\xi-\eta}\left[t^{\gamma+2\delta-2\gamma\delta -2}\left(E_{\gamma, \gamma+2\delta-2\gamma\delta-1}(\xi t^{\gamma})-E_{\gamma, \gamma+2\delta-2\gamma\delta-1}(\eta t^{\gamma})\right)\right],\label{pr3}
    \end{align}
    where $\eta$ and $\xi$ are the roots of the quadratic equation
    $$t^{2\gamma}+2\lambda t^{\gamma}+b=0.$$
    The functions appearing in \eqref{pr3} are the classical one parameter Mittag-Leffler functions (see for example \cite{gorenflo}
    for a complete review). 
    
    The other terms appearing in \eqref{prr} can be handled in a similar way. In particular, we have that 
    \begin{align}
    \Xi(\beta,t)= \frac{1}{\xi-\eta}\left[t^{\gamma+\delta-\gamma\delta -1}\left(E_{\gamma, \gamma+\delta-\gamma\delta}(\xi t^{\gamma})-E_{\gamma, \gamma+\delta-\gamma\delta}(\eta t^{\gamma})\right)\right]
    \end{align}
    and
    \begin{align}
    \Omega(\beta,t)= \frac{t^{\gamma-1}}{\xi-\eta}[E_{\gamma,\gamma}(\xi t^{\gamma})-E_{\gamma,\gamma}(\eta t^{\gamma})].
    \end{align}
    Taking all these terms together and applying the convolution theorem, we have the Fourier transform 
    of the solution of the Cauchy problem
    \begin{align}
    \nonumber u^*(\beta,t)&= \frac{1}{\xi-\eta}\bigg\{\left[t^{\gamma+2\delta-2\gamma\delta -2}\left(E_{\gamma, \gamma+2\delta-2\gamma\delta-1}(\xi t^{\gamma})-E_{\gamma, \gamma+2\delta-2\gamma\delta-1}(\eta t^{\gamma})\right)\right]f_1^*(\beta)\\
    \nonumber &+\left[t^{\gamma-1}\left(E_{\gamma, \gamma+\delta-\gamma\delta}(\xi t^{\gamma})-E_{\gamma, \gamma+\delta-\gamma\delta}(\eta t^{\gamma})\right)\right]f_2^*(\beta)\\
    \nonumber &+ \int_0^t \tau^{\gamma-1} F^{*}(\beta, t-\tau)
    [E_{\gamma,\gamma}(\xi \tau^{\gamma})-E_{\gamma,\gamma}(\eta \tau^{\gamma})]d\tau\bigg\}.
    \end{align}
    
    Finally by using the inverse Fourier transform we obtain the claimed result.
    \end{proof}
    \begin{os}
    Under the same conditions of Theorem 4.1, if $\theta = 0$, the space-fractional derivative becomes a Riesz derivative and the parameter $b$ appearing in the solution assume the more simple form
    \begin{equation}
    b= \omega+ c^2|\beta|^{\alpha}.
    \end{equation} 
    Moreover, for $\omega = 0$ the solution obtained in this case reduces to the one given by Garg et al. in \cite{Garg}.
    
    \end{os}
    
    \begin{os}
    We observe that, for $\gamma = 1/2$, $\delta = 3/2$, $f_2(x)= F(x)= \omega= 0$, $\alpha = 2$ and $f_1(x)= \delta(x)$, the solution \eqref{remo} coincides with 
    \begin{align}
    u(x,t)& =\frac{1}{2c\sqrt{\pi t}}\int_{_-\infty}^{+\infty}\bigg\{e^{-\frac{w^2}{4t}-\lambda w}\bigg[\lambda I_0\left(\frac{\lambda}{c}
    \sqrt{c^2w^2-x^2}\right)+\\
	\nonumber  &+\frac{\partial}{\partial w}I_0\left(\frac{\lambda}{c}
	    \sqrt{c^2w^2-x^2}\right)  \bigg]1_{\{|x|<cw\}}\\
	 \nonumber &   +c\left[\delta(x-cw)+\delta(x+cw)\right]\bigg\}dw,
    \end{align}
    (see Theorem 4.2 in \cite{ptrf} for the complete proof).
    This is the probability density of the telegraph process $\mathcal{T}(t)$, $t>0$, with a Brownian time, that is 
    $$W(t)= \mathcal{T}(|B(t|).$$
    In the case $\gamma = 1$, for all $\delta \in [0,1]$, $f_2(x)= F(x)= \omega= 0$, $\alpha = 2$ and $f_1(x)= \delta(x)$, the solution \eqref{remo} coincides with the fundamental solution of the telegraph equation, that is the probability law of the telegraph process.
    Moreover, we observe that, in the case $\gamma= 1$, $\delta\in [0,1]$, $f_1(x)= F(x)=\omega= 0$, $\alpha= 2$ and $f_2(x)= \delta(x)$,
    the Fourier transform of the solution \eqref{remo} becomes
    \begin{equation}
    u^*(\beta,t)= \frac{\left(e^{\xi t}-e^{\eta t}\right)}{\xi-\eta},
    \end{equation}
    which coincides with the characteristic function of the following 
    distribution
    \begin{equation}
    P\{\mathcal{T}(t)\in dx,\bigcup_{k=0}^{\infty} N(t)= 2k+1\}= P\{\mathcal{T}(t)\in dx, N(t)= 2k+1\},
    \end{equation}
    where $N(t)$ is the homogeneous Poisson counting process, that governs
    the changes of direction of the telegraph process. 
    \end{os}

    \begin{os}
    The problem of the existence and uniqueness of the solution for the abstract fractional telegraph process has been considered by Herzallah and Baleanu in \cite{esis}. By simple adaptation of their arguments, the existence and uniqueness of the solution 
    to the Cauchy problem in Theorem 4.1 can be proved. 
    \end{os}

    \end{document}